\theoremstyle{plain}
\newtheorem{lemma}[]{Lemma}
\newtheorem{proposition}{Proposition}[]
\theoremstyle{definition}
\newtheorem{definition}{Definition}[]
\newtheorem{example}{Example}[]
\theoremstyle{remark}
\newcommand\norm[1]{\left\lVert#1\right\rVert}
\newcommand\affh{\textnormal{AffH}}
\DeclareMathOperator*{\Prob}{Prob}
\begin{document}
\title{On the computation of a non-parametric estimator by convex optimization}
\author[1]{Akshay Seshadri\thanks{Email: \texttt{akshay.seshadri@colorado.edu}}}
\affil[1]{Department of Physics, University of Colorado Boulder}
\author[2]{Stephen Becker}
\affil[2]{Department of Applied Mathematics, University of Colorado Boulder}
\date{\today}
\maketitle

\begin{abstract}
    Estimation of linear functionals from observed data is an important task in many subjects. Juditsky \& Nemirovski [The Annals of Statistics 37.5A (2009): 2278-2300] propose a framework for non-parametric estimation of linear functionals in a very general setting, with nearly minimax optimal confidence intervals. They compute this estimator and the associated confidence interval by approximating the saddle-point of a function. While this optimization problem is convex, it is rather difficult to solve using existing off-the-shelf optimization software. Furthermore, this computation can be expensive when the estimators live in a high-dimensional space. We propose a different algorithm to construct this estimator. Our algorithm can be used with existing optimization software and is much cheaper to implement even when the estimators are in a high-dimensional space, as long as the Hellinger affinity (or the Bhattacharyya coefficient) for the chosen parametric distribution can be efficiently computed given the parameters. We hope that our algorithm will foster the adoption of this estimation technique to a wider variety of problems with relative ease.
\end{abstract}

There are many situations where one wishes to estimate linear functionals of an unknown state using only observations of quantities determined by the state (i.e., indirect measurements of the state). Such a scenario is prevalent, for example, in tomography. Therefore, one would ideally like to incorporate these different measurements in such a way that not only we find a good estimate of the linear functional, but also ensure that the associated confidence interval is tight. Juditsky \& Nemirovski~\cite{juditsky2009nonparametric}, extending prior work of Donoho~\cite{DonohoOptimalRecovery}, propose an approach to this problem that constructs an estimator for a specified linear functional algorithmically, incorporating these indirect measurements. The (symmetric) confidence interval associated with their estimator is guaranteed to be nearly minimax optimal. Furthermore, once a model for the system has been specified, along with the number and type of measurements that will be recorded, their method can construct the estimator and the confidence interval even before seeing any data. This fact can be useful, for example, when one wishes to minimize the measurements that need to be performed to achieve a desired size of the confidence interval.

To construct this estimator, Juditsky \& Nemirovski~\cite{juditsky2009nonparametric} propose an algorithm that involves computing the saddle-point of a concave-convex function to a given precision. While this optimization problem is convex, solving it using standard optimization algorithms or off-the-shelf optimization software like CVX~\cite{cvx} is difficult in practice. In some cases, it is possible to extend the capabilities of these software to handle such saddle-point problems~\cite{juditsky2021well}. However, even in such situations, the fact remains that one might need to perform a high-dimensional optimization for constructing the estimator. These reasons make the estimation technique hard to use in practice.

In this study, we propose a different algorithm to compute the estimator that overcomes these difficulties. While our algorithm also approximates the saddle-point, it can be implemented using off-the-shelf optimization software currently available. Moreover, our algorithm replaces the potentially high-dimensional optimization with a much cheaper function evaluation, thus greatly reducing the computational burden. Furthermore, we write the estimator in a form that is more amenable to interpretation, which could serve as a starting point for further theoretical investigations about the estimator. This algorithm might also be useful in the extensions of Juditsky \& Nemirovski's framework~\cite{juditsky2009nonparametric} to several other problems~\cite{juditsky2020near, juditsky2020statistical}.

We begin by reviewing the estimation framework proposed by Juditsky \& Nemirovski~\cite{juditsky2009nonparametric}. We then present our approach to compute the estimator, along with the relevant proofs.

\section{Non-parametric estimation framework}
Suppose that we are given a set of ``states" $\mathcal{X} \subseteq \mathbb{R}^{d}$ that is a compact and convex subset of $\mathbb{R}^n$. We wish to estimate the linear functional $g^T x$, where $g \in \mathbb{R}^n$ is some fixed vector, while the state $x \in \mathcal{X}$ of the system is unknown to us. While we don't know $x$, we have access to a single measurement outcome determined by $x$. Measurements are modeled using random variables that assign probabilities to the possible outcomes depending on the state.

To that end, Juditsky \& Nemirovski~\cite{juditsky2009nonparametric} consider a family of random variables $\mathrm{Z}_\mu$ parameterized by $\mu \in \mathcal{M}$, where $\mathcal{M} \subseteq \mathbb{R}^m$ is some set of parameters. These random variables take values in a separable complete metric space (or Polish space) $(\Omega, \Sigma)$ equipped with a $\sigma$-finite Borel measure $\mathbb{P}$~\cite{juditsky2009nonparametric}. We assume that $\mathrm{Z}_\mu$ has a probability density $p_\mu$ with respect to this reference measure $\mathbb{P}$. The state $x \in \mathcal{X}$ determines the random variable $\mathrm{Z}_{A(x)}$ through an affine function $A\colon \mathcal{X} \to \mathcal{M}$, and we are given one outcome of this random variable for the purpose of estimation.

Our goal is to construct an estimator for $g^T x$ that uses an outcome of the random variable $\mathrm{Z}_{A(x)}$ to give an estimate. An estimator is a real-valued Borel measurable function on $\Omega$. The set of estimators $\mathcal{F}$ we are allowed to work with is any finite-dimensional vector space comprised of real-valued Borel measurable functions on $\Omega$ as long as it contains constant functions~\cite{juditsky2009nonparametric}. The mapping $\mathcal{D}(\mu) = p_\mu$ between the parameter $\mu$ and the corresponding probability density $p_\mu$ is called a parametric density family~\cite{juditsky2009nonparametric}. In order to be able to choose an appropriate estimator from $\mathcal{F}$ given that the probability density of the random variable is $p_{A(x)}$, we want the set of estimators $\mathcal{F}$ and the parametric density family $\mathcal{D}$ to interact well with each other. This gives rise to the notion of a good pair defined by Juditsky \& Nemirovski~\cite{juditsky2009nonparametric}.
\begin{definition}[Good pair]
    \label{defn:good_pair}
    We call a given pair $(\mathcal{D}, \mathcal{F})$ of parametric density family $\mathcal{D}$ and finite-dimensional space $\mathcal{F}$ of Borel functions on $\Omega$ a good pair if the following conditions hold.
    \begin{enumerate}
        \item $\mathcal{M}$ is a relatively open convex set in $\mathbb{R}^m$.
        \item Whenever $\mu \in \mathcal{M}$, we have $p_\mu(\omega) > 0$ for all $\omega \in \Omega$.
        \item Whenever $\mu, \nu \in \mathcal{M}$, $\phi(\omega) = \ln(p_\mu(\omega)/p_\nu(\omega)) \in \mathcal{F}$.
        \item Whenever $\phi \in \mathcal{F}$, the function
                \begin{equation*}
                    F_{\phi}(\mu) = \ln\left(\int_\Omega \exp\left(\phi(\omega)\right) p_\mu(\omega) \mathbb{P}(d\omega)\right)
                \end{equation*}
              is well-defined and concave in $\mu \in \mathcal{M}$.
    \end{enumerate}
\end{definition}
Any estimator $\widehat{g} \in \mathcal{F}$ is called an affine estimator (note, however, that $\widehat{g}$ need not be an affine function in general).

To judge the performance of an arbitrary estimator $\widehat{g}$, we define the $\epsilon$-risk as follows~\cite{juditsky2009nonparametric}.
\begin{definition}[$\epsilon$-risk]
    Given a confidence level $1 - \epsilon \in (0, 1)$, we define the $\epsilon$-risk associated with an estimator $\widehat{g}$ as
    \begin{equation*}
        \mathcal{R}(\widehat{g}; \epsilon) = \inf\left\{\delta: \sup_{x \in \mathcal{X}} \Prob_{\omega \sim p_{A(x)}}\left\{\omega: |\widehat{g}(\omega) - g^T x| > \delta\right\} < \epsilon\right\} ,
    \end{equation*}
    where $\omega \sim p_{A(x)}$ means that $\omega$ is sampled according to $p_{A(x)}$. The corresponding minimax optimal risk is defined as
    \begin{equation*}
        \mathcal{R}_*(\epsilon) = \inf_{\widehat{g}} \mathcal{R}(\widehat{g}; \epsilon)
    \end{equation*}
    where the infimum is taken over \textit{all} Borel functions $\widehat{g}$ on $\Omega$. Restricting to just the affine estimators, the affine risk is defined as
    \begin{equation*}
        \mathcal{R}_{\text{aff}}(\epsilon) = \inf_{\widehat{g} \in \mathcal{F}} \mathcal{R}(\widehat{g}; \epsilon) .
    \end{equation*}
\end{definition}
Intuitively, the $\epsilon$-risk of an estimator gives the smallest possible additive error of the estimator for the given confidence level, independent of the actual state of the system or the observed measurement outcome.

Optimizing over all possible estimators to find the best estimator can be computationally intractable. For this reason, Juditsky \& Nemirovski focus on constructing an affine estimator. It turns out that we don't lose much by restricting our attention to affine estimators. Indeed, Juditsky \& Nemirovski~\cite{juditsky2009nonparametric} prove that if $(\mathcal{D}, \mathcal{F})$ is a good pair, then there is an estimator $\widehat{g}_* \in \mathcal{F}$ with $\epsilon$-risk at most $\Phi_*(\ln(2/\epsilon))$, such that
\begin{align*}
    \mathcal{R}_{\text{aff}}(\epsilon) &\leq \Phi_*(\ln(2/\epsilon)) \leq \vartheta(\epsilon) \mathcal{R}_*(\epsilon) \\
    \vartheta(\epsilon) &= 2 + \frac{\ln(64)}{\ln(0.25/\epsilon)}
\end{align*}
for $\epsilon \in (0, 0.25)$.

Juditsky \& Nemirovski~\cite{juditsky2009nonparametric} propose the following method to construct the estimator $\widehat{g}_*$ and the risk $\Phi_*(\ln(2/\epsilon))$ (to a precision $2\delta > 0$).
\begin{enumerate}[leftmargin=0.2cm]
    \item For $r \geq 0$, define the function $\Phi_r\colon (\mathcal{X} \times \mathcal{X}) \times (\mathcal{F} \times \mathbb{R}_+) \to \mathbb{R}$ as
    \begin{align}
        \Phi_r(x, y;\ \phi, \alpha) = g^T x - g^T y\ +\ &\alpha \Bigg[\ln\left(\int_\Omega \exp(-\phi(\omega)/\alpha) p_{A(x)}(\omega) \mathbb{P}(d\omega)\right) \nonumber \\
                                    &\hspace{0.25cm} + \ln\left(\int_\Omega \exp(\phi(\omega)/\alpha) p_{A(y)}(\omega) \mathbb{P}(d\omega)\right)\Bigg] + 2\alpha r . \label{eqn:Phi_r}
    \end{align}
    Juditsky \& Nemirovski~\cite{juditsky2009nonparametric} show that $\Phi_r$ has the following properties. $\Phi_r$ is continuous and concave in $(x, y)$, continuous and convex in $(\phi, \alpha)$, and we have $\Phi_r \geq 0$. Furthermore, $\Phi_r$ has a well-defined saddle-point value.

    \item Denote the saddle-point value of $\Phi_r$ by $2 \Phi_*(r)$:
        \begin{equation}
            \Phi_*(r) = \frac{1}{2} \sup_{x, y \in \mathcal{X}} \inf_{\phi \in \mathcal{F}, \alpha > 0} \Phi_r(x, y; \phi, \alpha)
                      = \frac{1}{2} \inf_{\phi \in \mathcal{F}, \alpha > 0} \max_{x, y \in \mathcal{X}} \Phi_r(x, y; \phi, \alpha) \label{eqn:Phi_r_saddle_point}.
        \end{equation}

    \item Given a confidence level $1 - \epsilon \in (0.75, 1)$ and a precision $\delta > 0$, find $\phi_* \in \mathcal{F}$ and $\alpha_* > 0$ such that
        \begin{equation*}
            \max_{x, y \in \mathcal{X}} \Phi_{\ln(2/\epsilon)}(x, y; \phi_*, \alpha_*) \leq 2 \Phi_*(\ln(2/\epsilon)) + \delta.
        \end{equation*}
        This is achieved by minimizing the convex function
        \begin{equation*}
            \overline{\Phi}_{\ln(2/\epsilon)}(\phi, \alpha) = \max_{x, y \in \mathcal{X}} \Phi_{\ln(2/\epsilon)}(x, y; \phi, \alpha).
        \end{equation*}

    \item The estimator $\widehat{g}_* \in \mathcal{F}$ is then defined as
        \begin{equation}
            \widehat{g}_* = \phi_* + c \label{eqn:JN_estimator}
        \end{equation}
        where the constant $c$ is obtained by solving the optimization problem
        \begin{align}
            c &= \frac{1}{2} \max_{\chi \in X} \left[g^T x + \alpha_* \ln\left(\int_\Omega \exp\left(-\phi_* / \alpha_*\right) p_{A(x)} \mathbb{P}(d\omega)\right)\right] \nonumber \\
              &\hspace{0.3cm}- \frac{1}{2} \max_{y \in X} \left[-g^T y + \alpha_* \ln\left(\int_\Omega \exp\left(\phi_* / \alpha_*\right) p_{A(y)} \mathbb{P}(d\omega)\right)\right]. \label{eqn:JN_estimator_constant}
        \end{align}
        Then, the $\epsilon$-risk of the estimator $\widehat{g}_*$ is bounded above by $\Phi_*(\ln(2/\epsilon)) + 2\delta$.
\end{enumerate}
Given an observation $\omega$ of $\mathrm{Z}_{A(x)}$, our estimate for $g^T x$ is given by $\widehat{g}_*(\omega)$ with an additive error of $\Phi_*(\ln(2/\epsilon)) + 2\delta$ and a confidence level of $1 - \epsilon$. Note that computing the estimator $\widehat{g}_*$ requires one to perform optimization and can be time consuming, but once the estimator has been computed, the estimates can be obtained using $\widehat{g}_*$ almost instantaneously.

So far we have described how to find an estimate for $g^T x$ from one outcome of a single random variable $\mathrm{Z}_{A(x)}$. In practice, we will need to consider many outcomes of different random variables $\mathrm{Z}_{A^{(l)}(x)}$, which corresponds to $l = 1, \dotsc, L$ different types of measurement. More precisely, we are given Polish spaces $(\Omega^{(l)}, \Sigma^{(l)})$ equipped with a $\sigma$-finite Borel measure $\mathbb{P}^{(l)}$ for $l = 1, \dotsc, L$. We are also given a set of parameters $\mathcal{M}^{(l)}$ for $l = 1, \dotsc, L$. For each $l = 1, \dotsc, L$, we are given a family of random variables $\mathrm{Z}_{\mu_l}$ taking values in $\Omega^{(l)}$, where $\mu_l \in \mathcal{M}^{(l)}$. The random variable $\mathrm{Z}_{\mu_l}$ has a probability density $p^{(l)}_{\mu_l}$ with respect to the reference measure $\mathbb{P}^{(l)}$. As before, we are given affine mappings $A^{(l)}\colon \mathcal{X} \to \mathcal{M}^{(l)}$ for $l = 1, \dotsc, L$ that map the state $x \in \mathcal{X}$ of the system to a corresponding parameter. For each $l = 1, \dotsc, L$, we can choose estimators for the $l^{\text{th}}$ measurement from the set $\mathcal{F}^{(l)}$, which is a finite-dimensional vector space of real-valued Borel measurable functions on $\Omega^{(l)}$ that contains constant functions. To incorporate the outcomes of these different random variables, Juditsky \& Nemirovski~\cite{juditsky2009nonparametric} define the direct product of good pairs, which essentially constructs one large good pair from many smaller ones.
\begin{definition}[Direct product of good pairs]
    \label{defn:direct_product_good_pair}
    Considering the following quantities for $l = 1, \dotsc, L$. Let $(\Omega^{(l)}, \Sigma^{(l)})$ be a Polish space endowed with a Borel $\sigma$-finite measure $\mathbb{P}^{(l)}$. Let $\mathcal{D}^{(l)}(\mu_l) = p^{(l)}_{\mu_l}$ be the parametric density family for $\mu_l \in \mathcal{M}^{(l)}$. Let $\mathcal{F}^{(l)}$ be a finite-dimensional linear space of Borel functions on $\Omega^{(l)}$ containing constants, such that the pair $(\mathcal{D}^{(l)}, \mathcal{F}^{(l)})$ is good. Then the direct product of these good pairs $(\mathcal{D}, \mathcal{F}) = \bigotimes_{l = 1}^L (\mathcal{D}^{(l)}, \mathcal{F}^{(l)})$ is defined as follows.
    \begin{enumerate}
        \item The large space is $\Omega = \Omega^{(1)} \times \dotsb \times \Omega^{(L)}$ endowed with the product measure $\mathbb{P} = \mathbb{P}^{(1)} \times \dotsb \times \mathbb{P}^{(L)}$.
        \item The set of parameters is $\mathcal{M} = \mathcal{M}^{(1)} \times \dotsb \times \mathcal{M}^{(L)}$, and the associated parametric density family is $\mathcal{D}(\mu) = p_\mu \equiv \prod_{l = 1}^L p^{(l)}_{\mu_l}$ for $\mu =(\mu_1, \dotsc, \mu_L) \in \mathcal{M}$.
        \item The linear space $\mathcal{F}$ comprises of all functions $\phi$ defined as $\phi(\omega_1, \omega_2, \dotsc, \omega_L) = \sum_{l = 1}^L \phi^{(l)}(\omega_l)$, where $\phi^{(l)} \in \mathcal{F}^{(l)}$ and $\omega_l \in \Omega^{(l)}$ for $l = 1, \dotsc, L$.
    \end{enumerate}
\end{definition}
It can be verified that the direct product of good pairs is a good pair~\cite{juditsky2009nonparametric}.

To obtain an estimator that accounts for all the given measurement outcomes, we can apply the procedure outlined for constructing the estimator $\widehat{g}_*$ for the single outcome case to the direct product of good pairs. Specifically, if we observe $R_l$ outcomes of the random variable $\mathrm{Z}_l$ for $l = 1, \dotsc, L$, we need to minimize the function
\begin{equation}
    \overline{\Phi}_{\ln(2/\epsilon)}(\phi, \alpha) = \max_{x, y \in \mathcal{X}} \Phi_{\ln(2/\epsilon)}(x, y; \phi, \alpha) \label{eqn:JN_estimator_optimization}
\end{equation}
where
\begin{align}
    \Phi_{\ln(2/\epsilon)}(x, y; \phi, \alpha) = g^T x - g^T y\ +\ &\alpha \sum_{l = 1}^L R_l \Bigg[\ln\left(\int_{\Omega^{(l)}} \exp(-\phi^{(l)}(\omega)/\alpha) p_{A(x)}(\omega) \mathbb{P}^{(l)}(d\omega)\right) \nonumber \\
                                &\hspace{0.25cm} + \ln\left(\int_{\Omega^{(l)}} \exp(\phi^{(l)}(\omega)/\alpha) p_{A(y)}(\omega) \mathbb{P}^{(l)}(d\omega)\right)\Bigg] + 2\alpha \ln(2/\epsilon) \label{eqn:Phi_r_multiple_outcomes}
\end{align}
and $\phi = (\phi^{(1)}, \dotsc, \phi^{(L)})$. 

\subsection{Computational hurdles in finding an approximation to the saddle-point\label{secn:computation_problems_JN_saddle_point}}
We now point out several difficulties in performing the minimization of $\overline{\Phi}_{\ln(2/\epsilon)}(\phi, \alpha)$ defined in Eq.~\eqref{eqn:JN_estimator_optimization} over $\phi \in \mathcal{F}$ and $\alpha > 0$. Note that this optimization is necessary to construct an estimator using Juditsky \& Nemirovski's approach.

\begin{enumerate}
    \item When we have $L$ types of measurements $\mathrm{Z}_1, \dotsc, \mathrm{Z}_L$, $\phi$ is a vector of the form $\phi = (\phi^{(1)}, \dotsc, \phi^{(L)})$. Since each $\phi^{(l)}$ is a vector by construction, $\phi$ can be a high-dimensional vector. This can make the minimization $\inf_{\alpha > 0} \inf_{\phi \in \mathcal{F}} \overline{\Phi}_{\ln(2/\epsilon)}(\phi, \alpha)$ costly to implement.

        We illustrate through a simple example that the estimator $\phi$ can be a high-dimensional vector even with only one type of measurement, i.e., $L = 1$. The following is Example~1 from Juditsky \& Nemirovski~\cite{juditsky2009nonparametric}.
        \begin{example}[Discrete distributions]
            Suppose that the set of parameters $\mathcal{M} = \{\mu \in \mathbb{R}^n \mid \mu > 0, \sum_{i = 1}^n \mu_i = 1\}$ characterizing the observations is the relatively open standard simplex. Let $\mathcal{D}$ be the mapping $\mathcal{D}(\mu) = \mu$ for any parameter $\mu \in \mathcal{M}$. In other words, the probability distribution $p_\mu$ determined by the parameter $\mu$ is just $p_\mu = \mu$. Suppose that the set of observations is $\Omega = \{1, \dotsc, n\}$, and given a parameter $\mu \in \mathcal{M}$, the element $i \in \Omega$ is observed with a probability of $\mu_i$. We take $\Sigma$ to be the discrete $\sigma$-algebra and $\mathbb{P}$ to be the counting measure.

            Any estimator $\phi\colon \Omega \to \mathbb{R}$ is an $n$-dimensional real vector. In particular, $\mathcal{F} = \mathbb{R}^n$ is a natural choice for the set of estimators, such that $(\mathcal{D}, \mathcal{F})$ is a good pair. For any system with a large number of observations $n$, the estimator $\phi$ is a high-dimensional vector.
        \end{example}

    \item Since $\overline{\Phi}_{\ln(2/\epsilon)}$ is itself a maximum of the function $\Phi_{\ln(2/\epsilon)}$, popular gradient based methods can be difficult to use for minimizing $\overline{\Phi}_{\ln(2/\epsilon)}(\phi, \alpha)$ over $\phi \in \mathcal{F}$ and $\alpha > 0$, even when the function $\Phi_{\ln(2/\epsilon)}(x, y; \phi, \alpha)$ is smooth in $\phi$ and $\alpha$. We remark that recent progress has been made to extend the ability of CVX to handle such problems when the function $\Phi_{\ln(2/\epsilon)}$ possesses appropriate properties~\cite{juditsky2021well}. Nevertheless, solving saddle-point problems on CVX will still require more computational effort (at least internally) than our proposed algorithm, which can work with the current capabilities of CVX.

    \item In some cases, the computation of the function $\overline{\Phi}_{\ln(2/\epsilon)}(\phi, \alpha)$ can itself be costly. For example, suppose that $\mathcal{X} \subseteq \mathbb{R}^{d \times d}$ is the set of positive semidefinite matrices with a trace constraint. Even for a moderately large $d$, evaluating the function $\overline{\Phi}_{\ln(2/\epsilon)}(\phi, \alpha)$ can be costly for a given $\phi$ and $\alpha$. Therefore, we would like to reduce the number of calls to the function $\overline{\Phi}_{\ln(2/\epsilon)}(\phi, \alpha)$.
\end{enumerate}

Our approach to constructing the estimator circumvents the optimization over $\phi \in \mathcal{F}$, thereby addressing the above issues to a reasonable extent. We essentially solve the saddle point system from the other direction and then construct the estimator, but this requires new justification because the $\phi$-component of the saddle-point is not unique, and Ref.~\cite{juditsky2009nonparametric} uses a specific choice of $\phi$ that is constructed algorithmically.

\section{A different approach to constructing the estimator}
We begin by presenting our algorithm for constructing the estimator. The justification for the algorithm is given soon after.
\begin{enumerate}
    \item Given a confidence level $1 - \epsilon \in (0.75, 1)$ and a precision $\delta > 0$, find an $\alpha_* > 0$, and subsequently $x^*, y^* \in \mathcal{X}$, such that
        \begin{equation}
            2 \Phi_*(\ln(2/\epsilon)) + \delta \geq 2 \alpha_* \ln(2/\epsilon) + \left(g^T x^* - g^T y^* + 2 \alpha_* \ln(\affh(A(x), A(y)))\right)
        \end{equation}
        by approximately solving the convex optimization problem
        \begin{equation*}
            2\Phi_*(\ln(2/\epsilon)) = \inf_{\alpha > 0} \left[2 \alpha \ln(2/\epsilon) + \max_{x, y \in X} \left(g^T x - g^T y + 2 \alpha \ln(\affh(A(x), A(y)))\right)\right]
        \end{equation*}
        where
        \begin{equation}
            \affh(\mu, \nu) = \int_\Omega \sqrt{p_\mu(\omega) p_\nu(\omega)} \mathbb{P}(d\omega) \label{eqn:affH}
        \end{equation}
        is the Hellinger affinity between the distributions $p_\mu$ and $p_\nu$.
    \item Then, compute the corresponding $\phi$-component of the approximation to the saddle-point using
        \begin{equation}
            \phi_* = \frac{\alpha_*}{2} \ln\left(\frac{p_{A(x^*)}}{p_{A(y^*)}}\right) \label{eqn:canonical_phi}
        \end{equation}
    \item The estimator $\widehat{g}_*$ is then readily obtained by setting
        \begin{equation}
            \widehat{g}_* = \phi_* + \frac{1}{2} \left(g^T x^* + g^T y^*\right) \label{eqn:canonical_estimator}
        \end{equation}
        The $\epsilon$-risk of $\widehat{g}_*$ is bounded above by $\Phi_*(\ln(2/\epsilon)) + \delta$.
\end{enumerate}

Observe that our algorithm does not require one to compute the minimum over $\phi \in \mathcal{F}$, thus avoiding the complications listed in Sec.~\eqref{secn:computation_problems_JN_saddle_point}. The inner (continuous) concave maximization over $x, y \in \mathcal{X}$ can often be performed using standard optimization algorithms or off-the-shelf optimization software like CVX, assuming that $\mathcal{X}$ is computationally tractable and the Hellinger affinity given in Eq.~\eqref{eqn:affH} can be efficiently computed. The outer convex minimization is one-dimensional, and therefore, one can use standard numerical routines (like, for example, those available in \texttt{scipy.optimize.minimize\_scalar}) to solve this problem efficiently. Furthermore, the estimator given in Eq.~\eqref{eqn:canonical_estimator} is appealing from a theoretical standpoint because we have a closed-form expression in terms of the saddle-point approximation $(x^*, y^*)$ and $\alpha^*$.

A potential drawback of our algorithm is that the Hellinger affinity between $p_\mu$ and $p_\nu$ must be efficiently computable given $\mu$ and $\nu$. For commonly encountered distributions, like the discrete distribution, Poisson distribution, and the Gaussian distribution, one can find closed-form expressions for the Hellinger affinity, and therefore, $\affh(\mu, \nu)$ can be efficiently computed for appropriate choice of the parameters $\mu$ and $\nu$. Note that a similar problem exists in Juditsky \& Nemirovski's~\cite{juditsky2009nonparametric} approach, wherein
\begin{equation*}
    \int_\Omega e^{-\phi(\omega)/\alpha} p_\mu(\omega) \mathbb{P}(d\omega)
\end{equation*}
must be efficiently computable given the parameter $\mu$. In a sense, the requirement of our algorithm might be easier to satisfy because the Hellinger affinity is a well-studied quantity, and furthermore, it is independent of the set $\mathcal{F}$ of affine estimators one chooses for the problem.

We now provide justification of our algorithm. Our algorithm differs from that of Juditsky \& Nemirovski~\cite{juditsky2009nonparametric} on two fronts. One, we approximate the saddle-point by solving a different optimization problem, which is based on the expression for saddle-point value obtained in Prop.~\eqref{prop:Phi_r_saddle_point_xy_alpha}. Two, we make a choice for the $\phi$-component of the saddle-point which, as noted in the proof of Lemma~\eqref{lemma:xy_phi_saddle_point_existence}, is possible because the $\phi$-component of the saddle-point is not unique. We show in Prop.~\eqref{prop:canonical_estimator_risk_bound} that this approach still gives an estimator with the same guarantees on the $\epsilon$-risk as the estimator constructed by Juditsky \& Nemirovski's~\cite{juditsky2009nonparametric} algorithm.

We begin by proving the existence of saddle-point in $(x, y)$ and $\phi$ components for a fixed $\alpha > 0$.

\begin{lemma}
    \label{lemma:xy_phi_saddle_point_existence}
    Let $\alpha > 0$ be fixed. Then, for $r \geq 0$, the function
    \begin{align}
        \Phi^\alpha_r(x, y; \phi) = g^T x - g^T y\ +\ &\alpha \Bigg[\ln\left(\int_\Omega \exp(-\phi(\omega)/\alpha) p_{A(x)}(\omega) \mathbb{P}(d\omega)\right) \nonumber \\
                                                      &\hspace{0.25cm} + \ln\left(\int_\Omega \exp(\phi(\omega)/\alpha) p_{A(y)}(\omega) \mathbb{P}(d\omega)\right)\Bigg] + 2\alpha r \label{eqn:Phi_alpha_r}
    \end{align}
    defined on $(\mathcal{X} \times \mathcal{X}) \times \mathcal{F}$ has a saddle-point \textnormal{(}maximum in $(x, y)$ and minimum in $\phi$\textnormal{)}. Furthermore, if $(x^*, y^*)$ is the $(x, y)$ component of the saddle-point, then the $\phi$-component of the saddle-point can be expressed as
    \begin{equation*}
        \phi_* = \frac{\alpha}{2} \ln\left(\frac{p_{A(x^*)}}{p_{A(y^*)}}\right)
    \end{equation*}
\end{lemma}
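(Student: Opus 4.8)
The plan is to eliminate the inner variable $\phi$ first, reducing the problem to a concave maximization over the compact convex set $\mathcal{X}\times\mathcal{X}$, and then to read off the $\phi$-component from the minimizer of the inner problem. I would begin by recording the convex--concave structure: for fixed $\alpha>0$ the map $(x,y)\mapsto\Phi^\alpha_r(x,y;\phi)$ is concave (inherited from the stated concavity of $\Phi_r$ in $(x,y)$), while for fixed $(x,y)$ the map $\phi\mapsto\Phi^\alpha_r(x,y;\phi)$ is convex, since each of the two terms $\alpha\ln\int_\Omega\exp(\mp\phi/\alpha)\,p_{A(\cdot)}\,\mathbb{P}(d\omega)$ is a log-integral-exponential of a linear functional of $\phi$ and hence convex.

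The heart of the argument is the explicit inner minimization, which I would carry out by Cauchy--Schwarz rather than by differentiation. Writing $p=p_{A(x)}$, $q=p_{A(y)}$ and applying Cauchy--Schwarz to $\sqrt{e^{-\phi/\alpha}p}$ and $\sqrt{e^{\phi/\alpha}q}$ gives $\left(\int_\Omega\sqrt{pq}\,\mathbb{P}(d\omega)\right)^2\le\int_\Omega e^{-\phi/\alpha}p\,\mathbb{P}(d\omega)\cdot\int_\Omega e^{\phi/\alpha}q\,\mathbb{P}(d\omega)$, so that for every $\phi\in\mathcal{F}$ one has $\Phi^\alpha_r(x,y;\phi)\ge g^Tx-g^Ty+2\alpha\ln(\affh(A(x),A(y)))+2\alpha r=:\psi(x,y)$, with equality precisely when $e^{-\phi/\alpha}p\propto e^{\phi/\alpha}q$, i.e.\ when $\phi=\tfrac{\alpha}{2}\ln(p_{A(x)}/p_{A(y)})$ up to an additive constant. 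Good-pair condition (3) together with closure of $\mathcal{F}$ under constants guarantees that this minimizer lies in $\mathcal{F}$, while the positivity in condition (2) ensures the Hellinger affinity is strictly positive so that $\psi$ is finite and continuous. Thus $\inf_{\phi\in\mathcal{F}}\Phi^\alpha_r(x,y;\phi)=\psi(x,y)$ is attained, and the minimizers form exactly the line $\{\tfrac{\alpha}{2}\ln(p_{A(x)}/p_{A(y)})+c:c\in\mathbb{R}\}$.

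With the inner problem solved, I would note that $\psi$, being an infimum of functions concave in $(x,y)$, is concave and upper semicontinuous on the compact convex set $\mathcal{X}\times\mathcal{X}$, hence attains its maximum at some $(x^*,y^*)$. The right saddle inequality is then immediate: the Cauchy--Schwarz bound at $(x^*,y^*)$ gives $\Phi^\alpha_r(x^*,y^*;\phi)\ge\psi(x^*,y^*)$ for all $\phi$, with equality at $\phi_*=\tfrac{\alpha}{2}\ln(p_{A(x^*)}/p_{A(y^*)})$, so $\phi_*$ minimizes $\Phi^\alpha_r(x^*,y^*;\cdot)$. To obtain the left inequality --- that $(x^*,y^*)$ maximizes $\Phi^\alpha_r(\cdot,\cdot;\phi_*)$ --- I would invoke Sion's minimax theorem (applicable since $\mathcal{X}\times\mathcal{X}$ is compact convex and $\mathcal{F}$ is convex, with the established concavity/convexity and continuity) to get $\max_{x,y}\inf_\phi\Phi^\alpha_r=\inf_\phi\max_{x,y}\Phi^\alpha_r$. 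Combining this minimax equality with attainment of the inner infimum (just shown) and of the outer infimum produces a saddle-point whose $(x,y)$-component is a maximizer of $\psi$ and whose $\phi$-component must minimize the inner problem there, forcing the claimed form up to an additive constant.

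The main obstacle is precisely this left inequality / outer-attainment step, because $\mathcal{F}$ is not compact and the Cauchy--Schwarz bound points the wrong way once $\phi$ is frozen at $\phi_*$ and $(x,y)$ is let loose. I see two ways to close it. The soft route establishes that $\phi\mapsto\max_{x,y}\Phi^\alpha_r(x,y;\phi)$ is convex and coercive modulo the symmetry $\phi\mapsto\phi+c$, under which $\Phi^\alpha_r$ is exactly invariant (this invariance is the source of the non-uniqueness flagged in the text), so its infimum is attained; the minimax equality then forces any such minimizer to pair with $(x^*,y^*)$ into a genuine saddle, and the constant-shift symmetry lets me replace it by $\tfrac{\alpha}{2}\ln(p_{A(x^*)}/p_{A(y^*)})$. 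The concrete route is a Danskin/envelope computation showing that the supergradient of $\psi$ at $(x^*,y^*)$ coincides with the $(x,y)$-gradient of $\Phi^\alpha_r(\cdot,\cdot;\phi_*)$ there, so that the first-order optimality of $(x^*,y^*)$ for $\psi$ transfers verbatim to $\Phi^\alpha_r(\cdot,\cdot;\phi_*)$; this needs differentiability of $\mu\mapsto p_\mu$ but is transparent for the Gaussian, Poisson, and discrete families of interest.
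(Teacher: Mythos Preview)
Your proposal is correct, and the ``soft route'' you identify is essentially the paper's argument: both rely on Sion's theorem for the minimax equality, compactness of $\mathcal{X}\times\mathcal{X}$ for attainment of the outer maximum, and coercivity in $\phi$ for attainment of the infimum, then translate by a constant to put the $\phi$-component in the canonical form. Where you differ is that you carry out the inner minimization explicitly via Cauchy--Schwarz, obtaining both the value $\psi(x,y)$ and the complete set of minimizers $\tfrac{\alpha}{2}\ln(p_{A(x)}/p_{A(y)})+c$ in one stroke; the paper instead cites Goldenshluger \textit{et al.}\ for both the coercivity of $\Theta^{x,y}$ and the form of its minimizers. Your route is therefore more self-contained, and your phrasing ``coercive modulo the symmetry $\phi\mapsto\phi+c$'' is in fact more accurate than the paper's unqualified claim of coercivity, since $\Theta^{x,y}$ is invariant under that shift and hence not literally coercive on all of $\mathcal{F}$. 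The Danskin/envelope alternative you sketch is not in the paper; it would give a direct verification of the left saddle inequality for smooth parametric families, but, as you note, it imports a differentiability hypothesis the lemma does not assume, so the soft route is the one to execute in full generality.
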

\begin{proof}
    We essentially follow the proof of Thm.~2.1 in Goldenshluger \textit{et al.}~\cite{goldenshluger2015hypothesis} to show this result. We begin by noting that $\Phi^\alpha_r$ is continuous and concave in $(x, y) \in \mathcal{X} \times \mathcal{X}$ and continuous and convex in $\phi \in \mathcal{F}$. This follows from the corresponding properties of $\Phi_r$ proved in Ref.~\cite{juditsky2009nonparametric}, noting that $\Phi^\alpha_r(x,y;\phi) = \Phi_r(x, y; \phi, \alpha)$. Recall that $\mathcal{X}$ is a compact set and $\mathcal{F}$ is a finite-dimensional real vector space. Then, it follows from Sion-Kakutani theorem~\cite{sion1958general} that $\Phi^\alpha_r$ has a well-defined saddle-point value, that is
    \begin{equation*}
        \inf_{\phi \in \mathcal{F}} \max_{x, y \in \mathcal{X}} \Phi^\alpha_r(x, y; \phi) = \sup_{x, y \in \mathcal{X}} \inf_{\phi \in \mathcal{F}} \Phi^\alpha_r(x, y; \phi);
    \end{equation*}
    see Ref.~\cite{juditsky2009nonparametric} for details. Since $\Phi^\alpha_r(x, y; \phi)$ is continuous in $\phi \in \mathcal{F}$ for each $x, y \in \mathcal{X}$, we have that $\inf_{\phi \in \mathcal{F}} \Phi^\alpha(x, y; \phi)$ is upper semi-continuous in $(x, y) \in \mathcal{X} \times \mathcal{X}$. Then, since $\mathcal{X}$ is compact, the maximum of $\inf_{\phi \in \mathcal{F}} \Phi^\alpha(x, y; \phi)$ is attained in $\mathcal{X} \times \mathcal{X}$. Therefore, to show the existence of a saddle-point, it suffices to show that the minimum $\inf_{\phi \in \mathcal{F}} \Phi^\alpha(x, y; \phi)$ is attained in $\mathcal{F}$ for all $x, y \in \mathcal{X}$.

    We show this by proving that $\Phi^\alpha_r(x, y; \phi)$ is coercive in $\phi \in \mathcal{F}$ for all $x, y \in \mathcal{X}$, since $\mathcal{F}$ is a finite-dimensional vector space and $\Phi^\alpha_r(x, y; \phi)$ is continuous in $\phi \in \mathcal{X}$ for all $x, y \in \mathcal{X}$. For convenience, denote $\Phi^{x, y, \alpha}_r(\phi) = \Phi^\alpha_r(x, y; \phi)$. Recall that $\Phi^{x, y, \alpha}_r(\phi)$ is said to be coercive if $\lim_{\norm{\phi} \to \infty} \Phi^{x, y, \alpha}_r(\phi) = \infty$. Note that Goldenshluger \textit{et al.}~\cite{goldenshluger2015hypothesis} prove in Thm.~(2.1) that the function
    \begin{equation*}
        \Theta^{x, y}(\phi) = \ln\left(\int_\Omega \exp(-\phi(\omega)) p_{A(x)}(\omega) \mathbb{P}(d\omega)\right) + \ln\left(\int_\Omega \exp(\phi(\omega)) p_{A(y)}(\omega) \mathbb{P}(d\omega)\right)
    \end{equation*}
    is coercive in $\phi \in \mathcal{F}$ for all $x, y \in \mathcal{X}$. Since $\Phi^{x, y, \alpha}_r(\phi) = g^T x - g^T y + 2 \alpha r + \alpha \Theta^{x, y}(\phi/\alpha)$ for fixed $x, y \in \mathcal{X}$ and $\alpha > 0$, it follows that $\Phi^{x, y, \alpha}_r(\phi)$ is coercive in $\phi \in \mathcal{F}$. Therefore, $\Phi^\alpha_r$ has a saddle-point.

    Now, suppose that $(x^*, y^*)$ is the $(x, y)$ component of the saddle-point of $\Phi^\alpha_r$. Then, if $\phi_*$ is the $\phi$-component of the saddle-point, it minimizes the function $\Phi^\alpha_r(x^*, y^*; \phi)$, and therefore, $\phi_*/\alpha$ minimizes the function $\Theta^{x^*, y^*}(\phi/\alpha)$. In the proof of Thm~(2.1) of Goldenshluger \textit{et al.}~\cite{goldenshluger2015hypothesis} (Remark~(A.1) in particular), it is shown that every minimum of the function $\Theta^{x, y}(\phi)$ is of the form $1/2 \ln(p_{A(x)}/p_{A(y)}) + s$, where $s \in \mathbb{R}$ is a constant. Observe that $\Phi^\alpha_r$ is invariant under translations of the form $\phi \to \phi + s$ for $s \in \mathbb{R}$. Therefore, $\phi_*$ can be expressed as
    \begin{equation*}
        \frac{\phi_*}{\alpha_*} = \frac{1}{2} \ln\left(\frac{p_{A(x^*)}}{p_{A(y^*)}}\right)
    \end{equation*}
    by translating the $\phi$-component of the saddle-point with a constant, if necessary.
\end{proof}

Using this result, we express the saddle-point value of $\Phi_r$ in terms of $(x, y)$ and $\alpha$ components.
\begin{proposition}
    \label{prop:Phi_r_saddle_point_xy_alpha}
    The saddle-point value of the function $\Phi_r$ defined in Eq.~\eqref{eqn:Phi_r} can be expressed as
    \begin{equation*}
        2\Phi_*(r) = \inf_{\alpha > 0} \left[2 \alpha r + \max_{x, y \in \mathcal{X}} \left(g^T x - g^T y + 2 \alpha \ln(\affh(A(x), A(y)))\right)\right]
    \end{equation*}
    where $\affh$ is the Hellinger affinity defined in Eq.~\eqref{eqn:affH}. The quantity $\ln(\affh(\mu, \nu))$ is well-defined for every $\mu, \nu \in \mathcal{M}$.
\end{proposition}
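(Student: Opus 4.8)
The plan is to reduce the joint saddle-point over $(\phi,\alpha)$ to the fixed-$\alpha$ problem already settled by Lemma~\ref{lemma:xy_phi_saddle_point_existence}, and then to evaluate the resulting inner problem in closed form. Because the minimization in Eq.~\eqref{eqn:Phi_r_saddle_point} is carried out over the product set $\mathcal{F}\times\mathbb{R}_+$, I would first split the joint infimum into an iterated one,
\[
2\Phi_*(r) = \inf_{\phi\in\mathcal{F},\,\alpha>0}\ \max_{x,y\in\mathcal{X}}\Phi_r(x,y;\phi,\alpha) = \inf_{\alpha>0}\Big[\inf_{\phi\in\mathcal{F}}\ \max_{x,y\in\mathcal{X}}\Phi^\alpha_r(x,y;\phi)\Big],
\]
using the identification $\Phi^\alpha_r(x,y;\phi)=\Phi_r(x,y;\phi,\alpha)$ from Eq.~\eqref{eqn:Phi_alpha_r}. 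For each fixed $\alpha>0$, Lemma~\ref{lemma:xy_phi_saddle_point_existence} guarantees that $\Phi^\alpha_r$ has a genuine saddle-point, so the inner bracket may be rewritten as $\max_{x,y}\inf_{\phi}\Phi^\alpha_r(x,y;\phi)$. This min--max interchange is the crux of the argument and is exactly what the lemma supplies.

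The second step is to evaluate $\inf_{\phi}\Phi^\alpha_r(x,y;\phi)$ for a fixed pair $(x,y)$. By the lemma, together with the invariance of $\Phi^\alpha_r$ under the translation $\phi\mapsto\phi+\text{const}$, a minimizer is $\phi_*=\tfrac{\alpha}{2}\ln(p_{A(x)}/p_{A(y)})$, so that $\phi_*/\alpha=\tfrac12\ln(p_{A(x)}/p_{A(y)})$. Substituting into the two integral terms, I expect $\exp(-\phi_*/\alpha)=\sqrt{p_{A(y)}/p_{A(x)}}$ and $\exp(\phi_*/\alpha)=\sqrt{p_{A(x)}/p_{A(y)}}$, so each integrand collapses to $\sqrt{p_{A(x)}\,p_{A(y)}}$ and both integrals equal $\affh(A(x),A(y))$. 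This should give
\[
\inf_{\phi\in\mathcal{F}}\Phi^\alpha_r(x,y;\phi) = g^Tx-g^Ty+2\alpha\ln\big(\affh(A(x),A(y))\big)+2\alpha r.
\]

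Taking the maximum over $(x,y)\in\mathcal{X}\times\mathcal{X}$ — attained because this inner infimum is upper semicontinuous on the compact set $\mathcal{X}\times\mathcal{X}$, exactly as noted in the proof of Lemma~\ref{lemma:xy_phi_saddle_point_existence} — and then feeding the result back into the outer infimum over $\alpha>0$ reproduces the claimed formula. For the final assertion that $\ln(\affh(\mu,\nu))$ is well-defined for every $\mu,\nu\in\mathcal{M}$, I would bound the affinity on both sides: Cauchy--Schwarz in $L^2(\mathbb{P})$ yields $\affh(\mu,\nu)=\int_\Omega\sqrt{p_\mu p_\nu}\,\mathbb{P}(d\omega)\le\big(\int_\Omega p_\mu\,\mathbb{P}(d\omega)\big)^{1/2}\big(\int_\Omega p_\nu\,\mathbb{P}(d\omega)\big)^{1/2}=1$, while condition~2 of the good-pair definition ($p_\mu,p_\nu>0$ everywhere) forces $\affh(\mu,\nu)>0$, so the logarithm is finite.

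I expect the only genuinely delicate point to be the fixed-$\alpha$ min--max interchange, which is not elementary but is already available from Lemma~\ref{lemma:xy_phi_saddle_point_existence}; once that is granted, the remaining work is the explicit substitution of $\phi_*$ (pure algebra that turns both logarithmic terms into $\ln\affh$) together with a routine compactness argument for the attainment of the maximum. A minor thing to keep careful track of is the factor $\tfrac12$ relating $\Phi_*(r)$ to the saddle-point value and the factor $\alpha$ inside the logarithms, so that the substituted expression comes out as $2\alpha\ln\affh$ rather than $\alpha\ln\affh$.
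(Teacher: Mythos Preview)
Your proposal is correct and follows essentially the same route as the paper: split off $\inf_{\alpha>0}$, use Lemma~\ref{lemma:xy_phi_saddle_point_existence} for the fixed-$\alpha$ min--max interchange, evaluate the inner $\inf_\phi$ as $2\ln\affh$, and invoke condition~2 of the good-pair definition for positivity of $\affh$. The only cosmetic differences are that the paper cites~\cite{juditsky2009nonparametric,goldenshluger2015hypothesis} directly for the identity $\inf_{\phi\in\mathcal{F}}\Theta^{x,y}(\phi)=2\ln\affh(A(x),A(y))$ rather than substituting the explicit minimizer as you do, and omits your (harmless but unneeded) Cauchy--Schwarz upper bound on $\affh$.
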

\begin{proof}
    The function $\Phi_r$ can be written as $\Phi_r(x, y; \phi, \alpha) = \Phi^\alpha_r(x, y; \phi)$, where the function $\Phi^\alpha_r$ is defined in Eq.~\eqref{eqn:Phi_alpha_r}. Then, by Lemma~\eqref{lemma:xy_phi_saddle_point_existence}, we know that $\Phi^\alpha_r$ has a well-defined saddle-point for each $\alpha > 0$. Moreover, from the proof of Prop.~(3.1) of Ref.~\cite{juditsky2009nonparametric} (or Thm.~(2.1) of Ref.~\cite{goldenshluger2015hypothesis}), we know that
    \begin{equation*}
        \inf_{\phi \in \mathcal{F}} \Theta^{x, y}(\phi) = 2\ln(\text{AffH}(A(x), A(y)))
    \end{equation*}
    where
    \begin{equation*}
        \Theta^{x, y}(\phi) = \ln\left(\int_\Omega \exp(-\phi(\omega)) p_{A(x)}(\omega) \mathbb{P}(d\omega)\right) + \ln\left(\int_\Omega \exp(\phi(\omega)) p_{A(y)}(\omega) \mathbb{P}(d\omega)\right).
    \end{equation*}

    Noting that $\Phi_r(x, y; \phi) = 2 \alpha r + g^T x - g^T y + \alpha \Theta^{x, y}(\phi/\alpha)$, we have
    \begin{align*}
        2 \Phi_*(r) &= \inf_{\phi \in \mathcal{F}, \alpha > 0} \max_{x, y \in \mathcal{X}} \Phi_r(x, y; \phi, \alpha) \\
                    &= \inf_{\alpha > 0} \left[2 \alpha r + \inf_{\phi \in \mathcal{F}} \max_{x, y \in \mathcal{X}} \left(g^T x - g^T y + \alpha \Theta^{x, y}(\phi/\alpha)\right)\right] \\
                    &= \inf_{\alpha > 0} \left[2 \alpha r + \max_{x, y \in \mathcal{X}} \left(g^T x - g^T y + \alpha \inf_{\phi' \equiv \phi/\alpha \in \mathcal{F}} \Theta^{x, y}(\phi')\right)\right] \\
                    &= \inf_{\alpha > 0} \left[2 \alpha r + \max_{x, y \in \mathcal{X}} \left(g^T (x - y) + 2 \alpha \ln\left(\affh(A(x), A(y))\right)\right)\right]
    \end{align*}
    giving the desired result. We note that $\ln(\affh(A(x), A(y)))$ is continuous and concave on $\mathcal{X} \times \mathcal{X}$, as shown in Prop.~(3.1) of Ref.~\cite{juditsky2009nonparametric}. Observe that the maximum over $x, y \in \mathcal{X}$ is always non-negative because we obtain a value of zero when $x = y$.

    Note that $\affh(\mu, \nu) > 0$ for all $\mu, \nu \in \mathcal{M}$. This is because $\affh(\mu, \nu) = \int_\Omega \sqrt{p_\mu(\omega) p_\nu(\omega)} \mathbb{P}(d\omega)$, and since $p_\mu(\omega) > 0$ for every $\omega \in \Omega$ and $\mu \in \mathcal{M}$ by definition of a good pair, the integrand is positive. Consequently, $\ln(\affh(\mu, \nu))$, and therefore $\ln(\affh(A(x), A(y)))$, is well-defined.
\end{proof}

Therefore, we can use the above result to compute the $(x, y)$ and $\alpha$ components of the saddle-point of $\Phi_r$ to any precision $\delta > 0$. We can then construct an estimator as described in Eq.~\eqref{eqn:canonical_estimator}. In the following proposition, we show that the $\epsilon$-risk of this estimator is bounded above by $\Phi_*(\ln(2/\epsilon)) + \delta$.
\begin{proposition}
    \label{prop:canonical_estimator_risk_bound}
    Let $1 - \epsilon \in (0, 1)$ specify the confidence level, and let $\delta > 0$ be the given precision. Let $\alpha_* > 0$ be chosen such that
    \begin{equation*}
        2 \Phi_*(\ln(2/\epsilon)) + \delta \geq 2 \alpha_* \ln(2/\epsilon) + \max_{x, y \in \mathcal{X}} \left(g^T x - g^T y + 2 \alpha_* \ln(\affh(A(x), A(y)))\right)
    \end{equation*}
    Let $x^*, y^* \in \mathcal{X}$ be any points that attain the maximum in $\max_{x, y \in \mathcal{X}} \left(g^T x - g^T y + 2 \alpha_* \ln(\affh(A(x), A(y)))\right)$, and correspondingly, define
    \begin{equation*}
        \phi_* = \frac{\alpha_*}{2} \ln\left(\frac{p_{A(x^*)}}{p_{A(y^*)}}\right)
    \end{equation*}
    Then the estimator
    \begin{equation*}
        \widehat{g}_* = \phi_* + \frac{1}{2} \left(g^T x^* + g^T y^*\right)
    \end{equation*}
    has $\epsilon$-risk bounded above by $\Phi_*(\ln(2/\epsilon)) + \delta$.
\end{proposition}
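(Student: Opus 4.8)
The plan is to bound the two one-sided tail probabilities of $\widehat{g}_*$ by exponential (Chernoff) inequalities, exactly as in the risk analysis underlying Juditsky \& Nemirovski's guarantee, and then to show that our reconstructed pair $(\phi_*, c)$ with $c = \tfrac{1}{2}(g^T x^* + g^T y^*)$ makes both tails collapse onto the Hellinger-affinity expression that the choice of $\alpha_*$ already controls. Fix a threshold $\rho$ and a true state $x\in\mathcal{X}$; the event $\{\,|\widehat{g}_*(\omega)-g^T x|>\rho\,\}$ splits into the disjoint events $\{\widehat{g}_*(\omega)-g^T x>\rho\}$ and $\{g^T x-\widehat{g}_*(\omega)>\rho\}$. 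Applying Markov's inequality to $\exp(\pm\widehat{g}_*/\alpha_*)$ at inverse temperature $1/\alpha_*$ bounds each of these by a quantity whose logarithm is affine in $\rho$. Writing $\Psi_+(x) = g^T x + \alpha_*\ln\int_\Omega e^{-\phi_*/\alpha_*}\,p_{A(x)}\,\mathbb{P}(d\omega)$ and $\Psi_-(y) = -g^T y + \alpha_*\ln\int_\Omega e^{\phi_*/\alpha_*}\,p_{A(y)}\,\mathbb{P}(d\omega)$, a short computation shows that both tail probabilities are at most $\epsilon/2$ as soon as $\rho \ge \max\{\max_{x}\Psi_+(x)-c,\ \max_{y}\Psi_-(y)+c\} + \alpha_*\ln(2/\epsilon)$, and are strictly below $\epsilon/2$ for any larger $\rho$; summing the two tails and using the infimum in the definition of the $\epsilon$-risk, the risk is therefore at most this right-hand side.

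The second step evaluates this right-hand side for the specific choice $\phi_* = \tfrac{\alpha_*}{2}\ln(p_{A(x^*)}/p_{A(y^*)})$. Since $e^{\mp\phi_*/\alpha_*} = \sqrt{p_{A(y^*)}/p_{A(x^*)}}$ and its reciprocal, evaluating $\Psi_+$ at $x=x^*$ and $\Psi_-$ at $y=y^*$ makes the integrands collapse to $\sqrt{p_{A(x^*)}p_{A(y^*)}}$, so that $\Psi_+(x^*) = g^T x^* + \alpha_*\ln\affh(A(x^*),A(y^*))$ and $\Psi_-(y^*) = -g^T y^* + \alpha_*\ln\affh(A(x^*),A(y^*))$. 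With $c = \tfrac{1}{2}(g^T x^* + g^T y^*)$ the two bracketed terms become equal, each reducing to $\tfrac{1}{2}\left(g^T x^* - g^T y^* + 2\alpha_*\ln\affh(A(x^*),A(y^*))\right)$; because $(x^*,y^*)$ attains $\max_{x,y}\left(g^T x - g^T y + 2\alpha_*\ln\affh(A(x),A(y))\right)$, this equals half of that maximum, which by the defining inequality for $\alpha_*$ is at most $2\Phi_*(\ln(2/\epsilon)) + \delta - 2\alpha_*\ln(2/\epsilon)$. Adding back $\alpha_*\ln(2/\epsilon)$ yields the risk bound $\Phi_*(\ln(2/\epsilon)) + \delta/2 \le \Phi_*(\ln(2/\epsilon)) + \delta$.

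The main obstacle is that the tail bounds must hold uniformly over \emph{all} states, so the previous paragraph is valid only once I know that $x^*$ actually maximizes $\Psi_+$ over $\mathcal{X}$ and $y^*$ maximizes $\Psi_-$; a priori $\phi_*$ was reconstructed from $(x^*,y^*)$ rather than obtained by minimizing $\overline{\Phi}$ over $\phi$, so there is no free guarantee that $\max_x\Psi_+(x)=\Psi_+(x^*)$. I would close this gap by showing that $((x^*,y^*),\phi_*)$ is a genuine saddle-point of the fixed-$\alpha_*$ function $\Phi^{\alpha_*}_{\ln(2/\epsilon)}$ of Lemma~\eqref{lemma:xy_phi_saddle_point_existence}. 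Since $\Phi^{\alpha_*}_{\ln(2/\epsilon)}(x,y;\phi_*) = \Psi_+(x)+\Psi_-(y)+2\alpha_*\ln(2/\epsilon)$ separates additively in $x$ and $y$, the required maximality of $x^*$ and $y^*$ is precisely the statement that $(x^*,y^*)$ maximizes $\Phi^{\alpha_*}_{\ln(2/\epsilon)}(\cdot,\cdot;\phi_*)$. To obtain it I would invoke the standard product structure of the solution set of a function possessing a saddle value: the saddle-points form the Cartesian product of the set of maximin-optimal $(x,y)$ and the set of minimax-optimal $\phi$. By Proposition~\eqref{prop:Phi_r_saddle_point_xy_alpha} the value $\max_{x,y}\inf_{\phi}\Phi^{\alpha_*}_{\ln(2/\epsilon)}$ is attained exactly at the maximizers of $g^T x - g^T y + 2\alpha_*\ln\affh(A(x),A(y))$, so our $(x^*,y^*)$ is maximin-optimal; and by Lemma~\eqref{lemma:xy_phi_saddle_point_existence} together with the translation-invariance $\phi\mapsto\phi+s$ of $\Phi^{\alpha_*}_{\ln(2/\epsilon)}$, the function $\tfrac{\alpha_*}{2}\ln(p_{A(x^*)}/p_{A(y^*)})$ is minimax-optimal in $\phi$. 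The product structure then makes $((x^*,y^*),\phi_*)$ a saddle-point, delivering the needed maximality and completing the argument. I expect this optimality verification to be the delicate part, precisely because it hinges on the non-uniqueness of the $\phi$-component and on reconciling the ``$(x,y)$-first'' optimization of our algorithm with the ``$\phi$-first'' optimization that defines $\overline{\Phi}$.
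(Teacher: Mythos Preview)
Your proposal is correct and follows essentially the same route as the paper's proof. Both arguments (i) establish that $((x^*,y^*),\phi_*)$ is a saddle-point of $\Phi^{\alpha_*}_{\ln(2/\epsilon)}$ by combining the identity $\inf_\phi\Phi^{\alpha_*}_{\ln(2/\epsilon)}(x,y;\phi)=g^T x-g^T y+2\alpha_*\ln\affh(A(x),A(y))+2\alpha_*\ln(2/\epsilon)$ with Lemma~\ref{lemma:xy_phi_saddle_point_existence}, and then (ii) feed the resulting uniform bounds $\Phi^{\alpha_*}_{\ln(2/\epsilon)}(x,y^*;\phi_*),\,\Phi^{\alpha_*}_{\ln(2/\epsilon)}(x^*,y;\phi_*)\le 2\Phi_*(\ln(2/\epsilon))+\delta$ into a Chernoff/Markov tail estimate; the paper merely reverses the order of presentation (saddle-point first, then Chernoff) and handles the strict inequality in the risk definition via the slack $\epsilon'=\epsilon e^{-\delta/2\alpha_*}<\epsilon$ rather than by passing to $\rho>\Phi_*(\ln(2/\epsilon))+\delta/2$ and taking the infimum, but this is purely cosmetic.
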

\begin{proof}
    Since $x^*, y^* \in \mathcal{X}$ correspond to the maximum, we can write
    \begin{equation*}
        \max_{x, y \in \mathcal{X}} \left(g^T x - g^T y + 2 \alpha_* \ln(\affh(A(x), A(y)))\right) = g^T x^* - g^T y^* + 2 \alpha_* \ln(\affh(A(x^*), A(y^*)))
    \end{equation*}
    For the choice
    \begin{equation*}
        \phi_* = \frac{\alpha_*}{2} \ln\left(\frac{p_{A(x^*)}}{p_{A(y^*)}}\right)
    \end{equation*}
    we have
    \begin{equation*}
        2 \ln(\affh\left(A(x^*), A(y^*)\right) = \ln\left(\int_\Omega \exp(-\phi_*(\omega)/\alpha_*) p_{A(x^*)}(\omega) \mathbb{P}(d\omega)\right)
                                                        + \ln\left(\int_\Omega \exp(\phi_*(\omega)/\alpha_*) p_{A(y^*)}(\omega) \mathbb{P}(d\omega)\right)
    \end{equation*}
    Therefore, we can write
    \begin{equation*}
        2 \alpha_* \ln(2/\epsilon) + \max_{x, y \in \mathcal{X}} \left(g^T x - g^T y + 2 \alpha_* \ln(\affh(A(x), A(y)))\right) = \Phi^{\alpha_*}_{\ln(2/\epsilon)}(x^*, y^*; \phi_*)
    \end{equation*}
    where $\Phi^{\alpha_*}_{\ln(2/\epsilon)}$ is obtained using Eq.~\eqref{eqn:Phi_alpha_r}. Now, observe that we have
    \begin{equation*}
        \max_{x, y \in \mathcal{X}} \inf_{\phi \in \mathcal{F}} \Phi^{\alpha_*}_{\ln(2/\epsilon)}(x, y; \phi) = 2 \alpha_* \ln(2/\epsilon) + \max_{x, y \in \mathcal{X}} \left(g^T x - g^T y + 2 \alpha_* \ln(\affh(A(x), A(y)))\right)
    \end{equation*}
    which follows from the proof of Prop.~\eqref{prop:Phi_r_saddle_point_xy_alpha}. Therefore, $(x^*, y^*)$ and $\phi_*$ correspond to $(x, y)$ and $\phi$ components of the saddle-point of $\Phi^{\alpha_*}_{\ln(2/\epsilon)}$, respectively. Consequently, the points $x^*, y^* \in \mathcal{X}$ achieve the maximum in $\max_{x, y \in \mathcal{X}} \Phi^{\alpha_*}_{\ln(2/\epsilon)}(x, y; \phi_*)$. In particular, we have
    \begin{align}
        &\Phi^{\alpha_*}(x, y^*; \phi_*) \leq \Phi^{\alpha^*}_{\ln(2/\epsilon)}(x^*, y^*; \phi_*) \leq 2 \Phi_*(\ln(2/\epsilon)) + \delta \quad (\forall x \in \mathcal{X}) \nonumber \\
        &\Phi^{\alpha_*}(x^*, y; \phi_*) \leq \Phi^{\alpha^*}_{\ln(2/\epsilon)}(x^*, y^*; \phi_*) \leq 2 \Phi_*(\ln(2/\epsilon)) + \delta \quad (\forall y \in \mathcal{X}) \label{eqn:Phi_alpha_r_saddle_point_property}
    \end{align}
    where the last inequality follows from the definition of $\alpha_*$.

    Next, we rewrite the expression for the constant in the estimator $\widehat{g}_*$ in a convenient form. Since
    \begin{equation*}
        \int_\Omega \exp(-\phi_*(\omega)/\alpha_*) p_{A(x^*)}(\omega) \mathbb{P}(d\omega) = \int_\Omega \exp(\phi_*(\omega)/\alpha_*) p_{A(y^*)}(\omega) \mathbb{P}(d\omega)
    \end{equation*}
    holds for our choice of $\phi_*$, we can write
    \begin{align}
        c &\equiv \frac{1}{2} (g^T x^* + g^T y^*)
           = \frac{1}{2} \left[g^T x^* + \alpha_* \ln\left(\int_\Omega \exp(-\phi_*(\omega)/\alpha_*) p_{A(x^*)}(\omega) P(d\omega)\right) + \alpha_* \ln(2/\epsilon)\right] \nonumber \\
          &\hspace{3cm} - \frac{1}{2} \left[-g^T y^* + \alpha_* \ln\left(\int_\Omega \exp(\phi_*(\omega)/\alpha_*) p_{A(y^*)}(\omega) P(d\omega)\right) + \alpha_* \ln(2/\epsilon)\right] \nonumber \\
          &= \frac{1}{2} \Phi^{\alpha_*}_{\ln(2/\epsilon)}(x^*, y^*; \phi_*) - \left[-g^T y^* + \alpha_* \ln\left(\int_\Omega \exp(\phi_*(\omega)/\alpha_*) p_{A(y^*)}(\omega) P(d\omega)\right) + \alpha_* \ln(2/\epsilon)\right]
                                                                                                                                                                                    \label{eqn:canonical_constant_expression_x} \\
          &= \left[g^T x^* + \alpha_* \ln\left(\int_\Omega \exp(-\phi_*(\omega)/\alpha_*) p_{A(x^*)}(\omega) P(d\omega)\right) + \alpha_* \ln(2/\epsilon)\right] - \frac{1}{2} \Phi^{\alpha_*}_{\ln(2/\epsilon)}(x^*, y^*; \phi_*)
                                                                                                                                                                                        \label{eqn:canonical_constant_expression_y}
    \end{align}

    Note that our estimator is given as $\widehat{g}_* = \phi_* + c \in \mathcal{F}$. We define the quantity $R = \Phi_*(\ln(2/\epsilon)) + \delta$. Then, for any $x \in X$, we have
    \begin{align*}
        &g^T x + \alpha_* \ln\left(\int_\Omega \exp(-\widehat{g}_*/\alpha_*) p_{A(x)}(\omega) P(d\omega)\right) + \alpha_* \ln(2/\epsilon) \\
        &\qquad= g^T x + \alpha_* \ln\left(\int_\Omega \exp(-\phi_*(\omega)/\alpha_*) p_{A(x)}(\omega) P(d\omega)\right) + \alpha_* \ln(2/\epsilon) - c \\
        &\qquad= \Phi^{\alpha_*}_{\ln(2/\epsilon)}(x, y^*; \phi_*) - \frac{1}{2} \Phi^{\alpha_*}_{\ln(2/\epsilon)}(x^*, y^*; \phi_*) \\
        &\qquad\leq \Phi_*(\ln(2/\epsilon)) + \frac{\delta}{2} \\
        &\qquad= R - \frac{\delta}{2}
    \end{align*}
    where we used Eq.~\eqref{eqn:canonical_constant_expression_x} and Eq.~\eqref{eqn:Phi_alpha_r_saddle_point_property}. Similarly, using Eq.~\eqref{eqn:canonical_constant_expression_y} and Eq.~\eqref{eqn:Phi_alpha_r_saddle_point_property}, we find that
    \begin{equation*}
        -g^T y + \alpha_* \ln\left(\int_\Omega \exp(\widehat{g}_*/\alpha_*) p_{A(y)}(\omega) P(d\omega)\right) + \alpha_* \ln(2/\epsilon) \leq R - \frac{\delta}{2}
    \end{equation*}
    for all $y \in \mathcal{X}$. Then, dividing by $\alpha_* > 0$ and rearranging the terms, we find that
    \begin{align*}
        \ln\left(\int_\Omega \exp((g^T x - \widehat{g}_* - R)/\alpha_*) p_{A(x)}(\omega) P(d\omega)\right) &\leq \ln\left(\frac{\epsilon}{2}\right) - \frac{\delta}{2\alpha_*}
                                                                                                                                        \equiv \ln\left(\frac{\epsilon'}{2}\right) \quad (\forall x \in \mathcal{X}) \\
        \ln\left(\int_\Omega \exp((-g^T y + \widehat{g}_* - R)/\alpha_*) p_{A(y)}(\omega) P(d\omega)\right) &\leq \ln\left(\frac{\epsilon}{2}\right) - \frac{\delta}{2\alpha_*}
                                                                                                                                        \equiv \ln\left(\frac{\epsilon'}{2}\right) \quad (\forall y \in \mathcal{X})
    \end{align*}
    where $\epsilon' = \epsilon e^{-\delta/2\alpha_*} < \epsilon$. Then, from Markov's inequality, it follows that
    \begin{align*}
        \Prob_{\omega \sim p_{A(x)}}\left\{g^Tx - \widehat{g}_* - R \geq 0\right\} \leq \mathbb{E}[\exp((g^Tx - \widehat{g}_* - R)/\alpha_*)] &\leq \frac{\epsilon'}{2} \quad (\forall x \in \mathcal{X}) \\
        \Prob_{\omega \sim p_{A(y)}}\left\{-g^Ty + \widehat{g}_* - R \geq 0\right\} \leq \mathbb{E}[\exp((-g^Ty + \widehat{g}_* - R)/\alpha_*)] &\leq \frac{\epsilon'}{2} \quad (\forall y \in \mathcal{X}).
    \end{align*}
    Therefore, using the union bound, we can conclude that
    \begin{equation*}
        \Prob_{\omega \sim p_{A(x)}}\left\{|\widehat{g}_*(\omega) - g^Tx| \geq R\right\} \leq \epsilon' < \epsilon \quad (\forall x \in \mathcal{X}).
    \end{equation*}
    Therefore, the $\epsilon$-risk of the estimator $\widehat{g}_*$ is bounded above by $R = \Phi_*(\ln(2/\epsilon)) + \delta$.
\end{proof}

\end{document}